\numberwithin{equation}{section}
\newtheorem{thm}{Theorem}
\newtheorem{lem}{Lemma}
\newtheorem{rem}{Remark}
\title{A note on the L1 discretization error for the Caputo derivative in H\"older spaces\footnote{This is an accepted version of a manuscript published in Applied Mathematics Letters \textbf{161} (2025), 109364 with DOI: \url{https://doi.org/10.1016/j.aml.2024.109364}}}
\author{F\'elix del Teso\thanks{Departamento de Matem\'aticas, Universidad Aut\'onoma de Madrid, Spain, felix.delteso@uam.es}, \; \L ukasz P\l ociniczak\thanks{Faculty of Pure and Applied Mathematics, Wroc{\l}aw University of Science and Technology, Poland, lukasz.plociniczak@pwr.edu.pl}}
\date{}
\begin{document}
\maketitle

\begin{abstract}
	We establish  uniform error bounds of the L1 discretization of the Caputo derivative of H\"older continuous functions. The result can be understood as: \textit{error = degree of smoothness - order of the derivative.} We present an elementary proof and illustrate its optimality with numerical examples.\\
	
	Keywords: L1 method, discretization error, H\"older spaces, Caputo derivative
\end{abstract}

\section{Introduction}
The Caputo derivative is defined for sufficiently smooth functions by
\begin{equation}\label{eqn:Caputo2}
D^\alpha y(t)=\frac{1}{\Gamma(1-\alpha)}\int_0^t \frac{y'(s)}{(t-s)^\alpha} dt=\frac{1}{\Gamma(1-\alpha)}\frac{y(t)-y(0)}{t^{\alpha}}+\frac{\alpha}{\Gamma(1-\alpha)}\int_0^t \frac{y(t)-y(s)}{(t-s)^{1+\alpha}} ds, \quad 0<\alpha<1.
\end{equation}
Equivalent between formulations can be recovered integrating by parts.
Probably most of the methods used to discretize the Caputo derivative are derived or generalized from the Convolution Quadrature (utilizing the convolutional form of the Caputo derivative, \cite{lubich2004convolution}), Gr\"unwald-Letnikov (generalization of finite difference, \cite{gorenflo2007convergence}), or the L1 scheme (piecewise linear approximation, \cite{stynes2022survey}). The latter can be written in the following way
\begin{equation}\label{eqn:L1Scheme}
\delta^\alpha_\tau y(t_n) := \frac{\tau^{-\alpha}}{\Gamma(2-\alpha)} \left(y(t_n) - b_{n-1} y(0) - \sum_{i=1}^{n-1}\left(b_{n-i-j}-b_{n-i}\right)y(t_i)\right), \quad b_i = (i+1)^{1-\alpha} - i^{1-\alpha},
\end{equation}
through which the Caputo derivative is approximated on the discrete grid $t_n = n \tau \leq T$ with the step $\tau > 0$ and the maximal time $T>0$. The L1 method has many pleasant properties that make it versatile and useful in devising numerical methods for different purposes.

Although widely used in many contexts, the L1 method still misses some analytical results when it comes to estimating the truncation error. In this note, we are concerned with H\"older continuous functions and the corresponding performance of the L1 method. Historically the first proof of the accuracy of the discretization assumed that 
$y\in C^2[0,T]$ resulting in the order of $2-\alpha$ (see for example \cite{li2019theory}, Theorem 4.1). In this paper, we will find uniform bounds on the discretization error of the L1 method for functions belonging to a H\"older space $C^{k,\beta}[0,T]$ with $k + \beta > \alpha$. The main result can be diagrammatically illustrated as:
\emph{order of the truncation error} $= k + \beta - \alpha.$
Note that, for $C^{1,1}[0,T]$ functions we recover the well-known optimal order $2-\alpha$. 

\section{The main result}
The space of H\"older functions is defined in a standard way. For $k \in \mathbb{N}$ and $0<\beta\leq 1$ we define the \textit{H\"older space} by
\begin{equation}
C^{k,\beta}[0,T] := \left\{y \in C^k[0,T]: \, [y^{(k)}]_{C^{0,\beta}[0,T]} < \infty\right\} \quad \text{where} \quad [y]_{C^{0,\beta}[0,T]} := \sup_{ t\not= \tilde{t}, \ t,\tilde{t}\in [0,T]} \frac{|y(t)-y(\tilde{t})| }{|t-\tilde{t}|^{\beta}}.
\end{equation}
\begin{rem}
The Caputo derivative is naturally well-defined on $C^{0,\beta}[0,T]$ with $\beta > \alpha$. Indeed, we have
\begin{equation}
	\begin{split}
		|D^\alpha y(t)|&\leq\frac{1}{\Gamma(1-\alpha)}\frac{|y(t)-y(0)|}{t^{\alpha}}+\frac{\alpha}{\Gamma(1-\alpha)}\int_0^t \frac{|y(t)-y(s)|}{(t-s)^{1+\alpha}} ds \\
		&\leq [y]_{C^{0,\beta}[0,t]} \left(\frac{t^{\beta-\alpha}}{\Gamma(1-\alpha)}+\frac{\alpha}{\Gamma(1-\alpha)}\int_0^t (t-s)^{\beta-\alpha-1} ds\right)<+\infty.
	\end{split}
\end{equation} 
\end{rem}
The main result of the paper can now be stated as follows.

\begin{thm}\label{thm:Truncation}
Let $y \in C^{k,\beta}[0,T]$ with $k=0,1$ and $0\leq \beta \leq 1$ with $k+\beta > \alpha$. Then, the truncation error of the L1 discretization \eqref{eqn:L1Scheme} for $t_n\in (0, T]$ satisfies
\begin{equation}
	|D^\alpha y(t_n) - \delta^\alpha_\tau y(t_n)| \leq C_{\alpha,\beta, k,n} [y^{(k)}]_{C^{0,\beta}[0,t_n]} \tau^{k+\beta-\alpha},
\end{equation}
where the error constant has an explicit form
\begin{equation}
	C_{\alpha,\beta, k, n} = \begin{cases}
		\dfrac{1-n^{-\alpha}}{2^\beta\Gamma(1-\alpha)} + \dfrac{\alpha}{\Gamma(1-\alpha)(\beta-\alpha)(1+\beta-\alpha)} + \dfrac{\alpha\Gamma(1+\beta)}{\Gamma(2+\beta-\alpha)}, & k = 0, \vspace{4pt}\\
		\dfrac{1-n^{-\alpha}}{4(\beta+1)\Gamma(1-\alpha)}+ \dfrac{\alpha}{\Gamma(3-\alpha)} , & k = 1. \\
	\end{cases}
\end{equation} 
\end{thm}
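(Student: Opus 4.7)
The plan is to reinterpret the truncation error as the Caputo derivative of the piecewise linear interpolation error, and then to apply the difference-quotient form of \eqref{eqn:Caputo2}, which only requires H\"older regularity of the integrand. Let $\Pi_\tau y$ be the continuous piecewise linear interpolant of $y$ on the grid $\{t_i\}_{i=0}^n$. A direct computation, using that $(\Pi_\tau y)'$ is piecewise constant and then performing Abel summation, identifies the L1 quadrature as the exact Caputo derivative of this interpolant, $\delta^\alpha_\tau y(t_n) = D^\alpha(\Pi_\tau y)(t_n)$. Hence
\[
D^\alpha y(t_n) - \delta^\alpha_\tau y(t_n) = D^\alpha(y - \Pi_\tau y)(t_n).
\]

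Setting $z := y - \Pi_\tau y$ and applying the second expression in \eqref{eqn:Caputo2} at $t = t_n$, the interpolation conditions $z(0) = z(t_n) = 0$ kill both the boundary term and the $z(t_n)$ contribution in the integrand, leaving
\[
D^\alpha y(t_n) - \delta^\alpha_\tau y(t_n) = -\frac{\alpha}{\Gamma(1-\alpha)} \int_0^{t_n} \frac{(y - \Pi_\tau y)(s)}{(t_n-s)^{1+\alpha}}\, ds.
\]
I then split this integral as $\int_0^{t_{n-1}} + \int_{t_{n-1}}^{t_n}$, isolating the kernel's singularity in the last cell.

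Pointwise control of the integrand on a generic cell $[t_i, t_{i+1}]$ is obtained by setting $s = t_i + u$ with $u \in [0,\tau]$. For $k = 0$, writing $\Pi_\tau y(s)$ as a convex combination of $y(t_i)$ and $y(t_{i+1})$ and applying the H\"older condition to each difference yields
\[
|y(s) - \Pi_\tau y(s)| \leq \frac{[y]_{C^{0,\beta}[0,t_n]}}{\tau}\bigl((\tau-u) u^\beta + u(\tau-u)^\beta\bigr).
\]
For $k = 1$, the identity
\[
y(s) - \Pi_\tau y(s) = \frac{u(\tau-u)}{\tau}\left(\frac{1}{u}\int_{t_i}^s y'(r)\, dr - \frac{1}{\tau-u}\int_s^{t_{i+1}} y'(r)\, dr\right),
\]
combined with the mean value theorem for integrals and the H\"older bound $|y'(\xi_1) - y'(\xi_2)| \leq [y']_{C^{0,\beta}[0,t_n]} \tau^\beta$, delivers $|y(s) - \Pi_\tau y(s)| \leq [y']_{C^{0,\beta}[0,t_n]} \tau^\beta\, u(\tau-u)/\tau$.

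On the bulk $[0, t_{n-1}]$ I insert the supremum of the pointwise bound over $u \in [0,\tau]$ (attained at $u = \tau/2$), pull it outside the integral, and use $\int_0^{t_{n-1}} (t_n - s)^{-1-\alpha}\, ds = \alpha^{-1} \tau^{-\alpha}(1 - n^{-\alpha})$ to produce the clean factor $1 - n^{-\alpha}$. On the singular cell $[t_{n-1}, t_n]$ I substitute the pointwise bound and evaluate the resulting expressions as Beta-function values, which requires $\beta > \alpha$ in the case $k = 0$ for integrability of $(\tau-u)^{\beta-\alpha-1}$ at $u = \tau$. Collecting the contributions and absorbing the prefactor $\alpha/\Gamma(1-\alpha)$ reproduces the explicit constants $C_{\alpha,\beta,k,n}$. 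The main technical step is matching the Beta-function evaluations on the singular cell to the displayed combination of $\Gamma$-factors; beyond this, the argument is a direct estimation.
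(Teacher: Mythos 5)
Your argument follows essentially the same route as the paper: the identity $\delta^\alpha_\tau y(t_n) = D^\alpha (I_\tau y)(t_n)$ (Remark \ref{rem:defl1}), the reduction---via the interpolation conditions at $0$ and $t_n$---to estimating $\frac{\alpha}{\Gamma(1-\alpha)}\int_0^{t_n}|y(s)-I_\tau y(s)|(t_n-s)^{-1-\alpha}\,ds$, cellwise pointwise interpolation bounds, and the split into the singular cell $[t_{n-1},t_n]$ (Beta-function evaluations, requiring $\beta>\alpha$ when $k=0$) and the bulk $[0,t_{n-1}]$ (midpoint maximization together with $\int_0^{t_{n-1}}(t_n-s)^{-1-\alpha}ds=\alpha^{-1}\tau^{-\alpha}(1-n^{-\alpha})$). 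For $k=0$ your pointwise bound coincides with Lemma \ref{lem:Interpolation} and the stated constant comes out exactly.

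There is, however, a concrete gap in the $k=1$ case. Your pointwise estimate $|y(s)-I_\tau y(s)|\le [y']_{C^{0,\beta}[0,t_n]}\,\tau^\beta\, u(\tau-u)/\tau$, obtained from the mean value theorem with the crude bound $|\xi_1-\xi_2|\le\tau$, loses the factor $(\beta+1)^{-1}$ that appears in the stated constant $C_{\alpha,\beta,1,n}$. Inserting your bound into the bulk term yields $\frac{1-n^{-\alpha}}{4\Gamma(1-\alpha)}$ instead of $\frac{1-n^{-\alpha}}{4(\beta+1)\Gamma(1-\alpha)}$, so the final claim that the computation ``reproduces the explicit constants'' fails for $k=1$: you prove the theorem only with a constant inflated by the factor $\beta+1$ in that term (the rate $1+\beta-\alpha$ is unaffected, and your singular-cell computation agrees with the paper, which also discards the $(\beta+1)^{-1}$ there). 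The fix is the sharper interpolation bound of Lemma \ref{lem:Interpolation}: write $I_\tau y(t)-y(t)=\frac{(t-t_j)(t_{j+1}-t)}{\tau}\int_0^1\bigl(y'(\rho t_{j+1}+(1-\rho)t)-y'(\rho t_j+(1-\rho)t)\bigr)\,d\rho$, where the two arguments differ by exactly $\rho\tau$; the H\"older condition then gives $[y']_{C^{0,\beta}[0,t_n]}(\rho\tau)^\beta$ and the $\rho$-integral supplies the missing $1/(\beta+1)$.
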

\begin{rem}
The obtained order of convergence $k+\beta-\alpha$ is optimal under the assumptions of Theorem \ref{thm:Truncation}. This is illustrated in Section 3 with numerical example. 
\end{rem}
\begin{rem}
The above estimate is $\alpha$-robust and $n$-robust. 
To be more precise, on the one hand, if $k+\beta \geq 1$, that is, either $k=0$ and $\beta=1$ or $k=1$ and $\beta \in [0,1]$, then
$\sup_{n \in \mathbb{N}, \, \alpha \in (0,1)}C_{\alpha,\beta,k,n} < +\infty$, $\lim_{\alpha \to 0^+} C_{\alpha,\beta,k,n} = 0$ $\lim_{\alpha \to 1^-} C_{\alpha,1,0,n} = 2$ and  $\lim_{\alpha \to 1^-} C_{\alpha,\beta,1,n} = 1$. On the other hand, if $k+\beta < 1$, that is, $k=0$ and $\beta \in (0,1)$, then, for all $\alpha_0 \in (0,\beta)$, we have that
$\sup_{n \in \mathbb{N}, \, \alpha \in (0,\alpha_0)} C_{\alpha,\beta,0,n} < +\infty$ and $\lim_{\alpha \to 0^+} C_{\alpha,\beta,0,n} = 0$.
Note that, in this latter case, we exclude the limit $\alpha \to \beta^-$ from the robustness discussion. This is due to the fact that, in general, for a $\beta$-Hölder function, the Caputo derivative of order $\beta$ may fail to exist (for example, $D^{\beta} y(1)$ is not well defined for $y(t) = (1-t)^\beta$).
%
%

\end{rem}

The central tool that we will use in the proof of the above is the piecewise linear interpolation operator. 
It is defined as assigning a linear interpolant to each subinterval $[t_j, t_{j+1})$. That is, for a continuous function $y$, we have
\begin{equation}\label{eqn:Interpolation}
I_\tau y(t) = \frac{t-t_j}{\tau} y(t_{j+1}) + \frac{t_{j+1}-t}{\tau} y(t_{j}), \quad t\in [t_j, t_{j+1}), \quad j\in\mathbb{N}.
\end{equation}
\begin{rem}\label{rem:defl1}
By definition, the L1 discretization is a Caputo derivative of a piecewise linear interpolant. This can be verified by substituting \eqref{eqn:Interpolation} instead of $y(t)$ into \eqref{eqn:Caputo2}, to obtain \eqref{eqn:L1Scheme}. That is, $D^\alpha I_\tau y(t_n) = \delta_\tau^\alpha y(t_n)$.
\end{rem}
Let us establish some notation: given a function $y \in C([0,t_n])$ we define its modulus of continuity $\Lambda_{y}:[0,T]\to \mathbb{R}$ by
\begin{equation}
\Lambda_y(\delta):= \sup_{|t-\tilde{t}|\leq \delta, \ t,\tilde{t}\in [0,t_n]} |y(t)-y(\tilde{t})|.
\end{equation}
\begin{rem}\label{rem:modholder}
$\Lambda_y$ is nondecreasing, $\Lambda(\delta)\to 0^+$ as $\delta \to 0^+$ and, if $y\in C^{0,\beta}([0,T])$ for some $\beta\in (0,1]$, then
\[
\Lambda_y(\delta)\leq [y]_{C^{0,\beta}[0,t_n]} \delta^{\beta}.
\]
\end{rem}
The following key lemma gives the error of interpolation for H\"older functions in the pointwise and in the uniform sense. We do not claim any novelty of the following result; however, we wanted to include it here for completeness and for the precise statement needed for proof of the main result. 
\begin{lem}\label{lem:Interpolation}
Let $y\in C^{k, \beta}([0,t_n])$ with $k=0,1$ and $\beta\in[0,1]$. Then, for $t\in [t_j, t_{j+1})$ with $j=0,\ldots,n-1$ we have
\begin{equation}
	|I_\tau y(t) - y(t)|\leq 
	\begin{cases}
		\left((t-t_j) \Lambda_y(t_{j+1}-t) + (t_{j+1}-t) \Lambda_y(t-t_j)\right)\tau^{-1} &\textup{if} \quad k=0 \quad \text{and} \quad \beta=0,\\
		\left((t-t_j) (t_{j+1}-t)^\beta + (t_{j+1}-t)(t-t_j)^\beta\right)\tau^{-1} [y]_{C^{0,\beta}[0,t_n]} &\textup{if} \quad k=0 \quad \text{and} \quad \beta\in (0,1],\\
		(t-t_j)(t_{j+1}-t)\Lambda_{y'}(\tau)\tau^{-1} &\textup{if} \quad k=1 \quad \text{and} \quad \beta=0,\\
		(t-t_j)(t_{j+1}-t)(\beta+1)^{-1}\tau^{\beta-1}[y']_{C^{0,\beta}[0,t_n]} &\textup{if} \quad k=1 \quad \text{and} \quad \beta\in (0,1].
	\end{cases}
\end{equation}
\end{lem}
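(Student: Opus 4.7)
My plan is to derive all four cases from the single identity
\[
I_\tau y(t) - y(t) = \frac{t-t_j}{\tau}\bigl(y(t_{j+1}) - y(t)\bigr) + \frac{t_{j+1}-t}{\tau}\bigl(y(t_j) - y(t)\bigr),
\]
which follows from \eqref{eqn:Interpolation} and the fact that the two barycentric weights sum to $1$. Writing the interpolation error this way, the two $k=0$ cases fall out of the triangle inequality alone: in the $\beta=0$ subcase each increment is bounded by $\Lambda_y$ at the appropriate subinterval length, and in the $\beta\in(0,1]$ subcase by $[y]_{C^{0,\beta}[0,t_n]}$ times the corresponding length raised to $\beta$. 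Factoring the $\tau^{-1}$ out of the barycentric weights produces exactly the two stated expressions.

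For the $k=1$ cases I will first rewrite each increment of $y$ as an integral of $y'$ and rearrange, obtaining the more efficient representation
\[
I_\tau y(t) - y(t) = \frac{(t-t_j)(t_{j+1}-t)}{\tau}\,\bigl(\bar y'_R - \bar y'_L\bigr),
\]
where $\bar y'_L$ and $\bar y'_R$ denote the averages of $y'$ over $[t_j,t]$ and $[t,t_{j+1}]$ respectively. The remaining task is to estimate $|\bar y'_R - \bar y'_L|$ uniformly in $t$, and I will do this by the symmetric double-integral identity
\[
\bar y'_R - \bar y'_L = \frac{1}{(t-t_j)(t_{j+1}-t)}\int_{t_j}^{t}\!\!\int_{t}^{t_{j+1}}\bigl(y'(s_2)-y'(s_1)\bigr)\,ds_2\,ds_1,
\]
which follows by expanding both averages as iterated integrals and checking the constant factors.

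For $\beta=0$, the integrand is bounded by $\Lambda_{y'}(s_2-s_1)\leq\Lambda_{y'}(\tau)$ via monotonicity of the modulus of continuity, and one immediately reads off $|\bar y'_R-\bar y'_L|\leq\Lambda_{y'}(\tau)$, giving the stated pointwise bound after multiplying by the prefactor $(t-t_j)(t_{j+1}-t)/\tau$. For $\beta\in(0,1]$, applying $|y'(s_2)-y'(s_1)|\leq[y']_{C^{0,\beta}[0,t_n]}(s_2-s_1)^\beta$ reduces everything to evaluating the elementary iterated integral $\iint(s_2-s_1)^\beta\,ds_2\,ds_1$ over the rectangle $[t_j,t]\times[t,t_{j+1}]$. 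The main obstacle I anticipate is that the exact value of this integral is a symmetric function of $u=t-t_j$ and $v=t_{j+1}-t$ which does not obviously factor as $uv\tau^\beta/(\beta+1)$; extracting the claimed sharp constant $1/(\beta+1)$ requires a concavity-type comparison exploiting $u+v=\tau$, after which substituting back into the prefactor representation yields the lemma.
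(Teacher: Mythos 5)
Your handling of the two $k=0$ cases and of $k=1,\beta=0$ is correct and matches the paper's argument (triangle inequality with the barycentric weights; and your double-integral identity for $\bar y'_R-\bar y'_L$ does give $|\bar y'_R-\bar y'_L|\le \Lambda_{y'}(\tau)$, since $0\le s_2-s_1\le\tau$ on the rectangle). The gap is in the final step of the case $k=1$, $\beta\in(0,1]$. After inserting the pointwise bound $|y'(s_2)-y'(s_1)|\le [y']_{C^{0,\beta}[0,t_n]}(s_2-s_1)^\beta$ into your symmetric double integral, recovering the stated constant $(\beta+1)^{-1}$ would require, with $u=t-t_j$, $v=t_{j+1}-t$,
\begin{equation}
\int_0^u\!\!\int_0^v(a+b)^\beta\,db\,da \;=\; \frac{(u+v)^{\beta+2}-u^{\beta+2}-v^{\beta+2}}{(\beta+1)(\beta+2)} \;\le\; \frac{uv\,(u+v)^\beta}{\beta+1},
\end{equation}
and this comparison is false for $\beta\in(0,1)$: taking $u=v=\tau/2$ and $\beta=\tfrac12$, the left-hand side equals $\tau^{5/2}\bigl(1-2^{-3/2}\bigr)/\tfrac{15}{4}\approx 0.1724\,\tau^{5/2}$, while the right-hand side is $\tau^{5/2}/6\approx 0.1667\,\tau^{5/2}$. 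Equality holds only at the endpoints $\beta=0$ and $\beta=1$, which is why the failure is easy to miss. Consequently the "concavity-type comparison exploiting $u+v=\tau$" that you plan to invoke does not exist: once you have averaged the Hölder bound over all pairs $(s_1,s_2)$ in the rectangle, the quantity you are left with is already strictly larger than $uv\,\tau^{\beta-1}/(\beta+1)$, so no further algebra can yield the lemma's constant (only a weaker constant, which would then propagate into Theorem \ref{thm:Truncation}).

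The repair is to keep the two evaluation points of $y'$ correlated instead of averaging independently over the two subintervals, which is exactly what the paper does. Parametrizing both increments by the same $\rho\in[0,1]$ gives
\begin{equation}
I_\tau y(t)-y(t)=\frac{(t-t_j)(t_{j+1}-t)}{\tau}\int_0^1\Bigl(y'\bigl(\rho t_{j+1}+(1-\rho)t\bigr)-y'\bigl(\rho t_j+(1-\rho)t\bigr)\Bigr)\,d\rho,
\end{equation}
where the two arguments differ by exactly $\rho\tau$; the Hölder bound then produces $\int_0^1(\rho\tau)^\beta\,d\rho=\tau^\beta/(\beta+1)$, i.e. the stated constant, and the same identity with $\Lambda_{y'}(\rho\tau)\le\Lambda_{y'}(\tau)$ also covers $\beta=0$.
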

\begin{proof}
First, suppose that $k=0$ and fix $t\in[t_j, t_{j+1})$. By the observation that the coefficients of $y(t_j)$ and $y(t_{j+1})$ in $I_\tau y(t)$ sum up to $1$ we can write
\begin{equation}\label{eq:interpmod}
	\begin{split}
		|I_\tau y(t) - y (t)| \leq \frac{t-t_j}{\tau} |y(t_{j+1})-y(t)| + \frac{t_{j+1}-t}{\tau} |y(t_{j}) - y(t)|
		\leq \frac{t-t_j}{\tau} \Lambda_y(t_{j+1}-t) + \frac{t_{j+1}-t} {\tau}\Lambda_y(t-t_j).
	\end{split}
\end{equation}
From the above, the estimate for $\beta=0$ follows directly and when $\beta \in (0,1]$, we just use Remark \ref{rem:modholder}.
Now, if $k=1$, we use Taylor's theorem with explicit reminder to get
\begin{equation}
	\begin{split}
		I_\tau y(t) - y (t) &= \frac{t-t_j}{\tau} (y(t_{j+1})-y(t)) + \frac{t_{j+1}-t}{\tau} (y(t_{j}) - y(t))\\
		&= \frac{(t-t_j)(t_{j+1}-t)}{\tau}\int_0^1 \left(y'(\rho t_{j+1}+(1-\rho)t)-y'(\rho t_{j}+(1-\rho)t)\right)d \rho.
	\end{split}
\end{equation}
Taking absolute values in the above identity leads to the following estimate
\begin{equation}\label{eq:interpmod2}
	|I_\tau y(t) - y (t)| \leq  \frac{(t-t_j)(t_{j+1}-t)}{\tau} \int_0^1\Lambda_{y'}(\rho \tau) d\rho.
\end{equation}
Now, if $\beta=0$, we use the fact that $\Lambda_{y'}$ is nondecreasing to get $|I_\tau y(t) - y (t)| \leq (t-t_j)(t_{j+1}-t)\tau^{-1} \Lambda_{y'}(\tau)$. Finally, if $\beta\in(0,1]$, we use \eqref{eq:interpmod2} and Remark \ref{rem:modholder} to obtain the desired result.
\end{proof}
%

Now, we can proceed to the proof of the main result. 

\begin{proof}[Proof of Theorem \ref{thm:Truncation}]
We start with the case $k=0$ (thus,  $\beta>\alpha$). By Remark \ref{rem:defl1} and Lemma \ref{lem:Interpolation} we get

\begin{equation}\label{eqn:TruncationEstimate}
	\begin{split}
		|D^\alpha y(t_n) &- \delta^\alpha_\tau y(t_n)| 
		\leq  \frac{\alpha}{\Gamma(1-\alpha)} \int_0^{t_n} \frac{|I_\tau y(s) - y(s)|}{(t_n-s)^{1+\alpha}} ds = \frac{\alpha}{\Gamma(1-\alpha)} \sum_{j=0}^{n-1} \int_{t_j}^{t_{j+1}} \frac{|I_\tau y(s) - y(s)|}{(t_n-s)^{1+\alpha}} ds \\
		&\leq [y]_{C^{0,\beta}[0,t_n]}\tau^{-1}\frac{\alpha}{\Gamma(1-\alpha)} \sum_{j=0}^{n-1}\underbrace{\int_{t_j}^{t_{j+1}} (t_n-s)^{-1-\alpha} \left((s-t_{j})(t_{j+1}-s)^\beta + (t_{j+1}-s) (s-t_j)^\beta\right)ds}_{K_j}.
	\end{split}
\end{equation}
Since the only singularity occurs in the $j=n-1$ term with $n \geq 1$, the last integral has to be estimated separately by the change of the variable $w = (s-t_{n-1})/\tau$, hence
\begin{equation}
	\begin{split}
		K_{n-1} \leq \tau^{1+\beta-\alpha} \int_{0}^{1} \left(w (1-w)^{\beta-1-\alpha} + w^\beta(1-w)^{-\alpha}\right) dw = \tau^{1+\beta-\alpha}\left(\frac{1}{(\beta-\alpha)(1+\beta-\alpha)} + \frac{\Gamma(1+\beta)\Gamma(1-\alpha)}{\Gamma(2+\beta-\alpha)} \right).
	\end{split}
\end{equation}
where in the last step, we have used the definition of the beta function and its relation with the gamma function. The remaining part can be estimated by maximizing $(s-t_{j})(t_{j+1}-s)^\beta + (t_{j+1}-s) (s-t_j)^\beta$ for $s\in[t_j,t_{j+1})$ and summing the series to get

\begin{equation}
	\begin{split}
		\sum_{j=0}^{n-2} &K_j\leq \frac{\tau^{1+\beta}}{2^\beta} \sum_{j=0}^{n-2} \int_{t_j}^{t_{j+1}} (t_n-s)^{-1-\alpha} ds = \frac{\tau^{1+\beta}}{2^\beta} \int_0^{t_{n-1}} (t_n-s)^{-1-\alpha} ds = \tau^{1+\beta-\alpha} \frac{1-n^{-\alpha}}{2^\beta\alpha}.
	\end{split}
\end{equation}
Combining the both estimates we obtain the desired result. For $k=1$ we again use Lemma \ref{lem:Interpolation}. Similarly as before, we split the last integral from the sum and substitute $w = (s-t_{n-1})/\tau$ in it, while maximizing $(s-t_{j})(t_{j+1}-s)$ in the remaining ones
\begin{equation}
	\begin{split}
		|D^\alpha y(t_n) &- \delta^\alpha_\tau y(t_n)| \leq [y']_{C^{0,\beta}[0,t_n]} \frac{\alpha}{\Gamma(1-\alpha)} \tau^{\beta-1} \sum_{j=0}^{n-1} \int_{t_j}^{t_{j+1}} (t_n-s)^{-1-\alpha} (s-t_j) (t_{j+1}-s) ds \\
		&\leq [y']_{C^{0,\beta}[0,t_n]}\frac{\alpha}{\Gamma(1-\alpha)} \tau^{\beta-1} \left(\tau^{2-\alpha} \int_{0}^{1} (1-w)^{-\alpha} w \, dw + \frac{\tau^2}{4(\beta+1)} \sum_{j=0}^{n-2} \int_{t_j}^{t_{j+1}} (t_n-s)^{-1-\alpha} ds \right) \\
		&=[y']_{C^{0,\beta}[0,t_n]}\frac{\alpha}{\Gamma(1-\alpha)} \tau^{\beta-1} \left(\frac{\tau^{2-\alpha}}{(1-\alpha)(2-\alpha)} + \frac{\tau^2}{4(\beta+1)} \int_{0}^{t_{n-1}} (t_n-s)^{-1-\alpha} ds \right) \\ 
		&=\left(\frac{\alpha}{\Gamma(3-\alpha)} + \frac{1-n^{-\alpha}}{4(\beta+1)\Gamma(1-\alpha)} \right)[y']_{C^{0,\beta}[0,t_n]} \tau^{1+\beta-\alpha},
	\end{split}
\end{equation}
and the proof is completed. 
\end{proof}


\begin{rem}
The most regular case with $k=1$ and $\beta=1$ gives the optimal error of order $2-\alpha$ that coincides with the previously known case of $y\in C^2[0,T]$ (see \cite{li2019theory}), that is, 
$
	|D^\alpha y(t_n) - \delta^\alpha_\tau y(t_n)|\leq \tilde{C}_{\alpha,n}\tau^{2-\alpha} \max_{t\in[0,T]} |y''(t)|.
$
It is also possible to find the explicit asymptotic form of the best error constant $\tilde{C}_{\alpha,n}\sim -\zeta(\alpha-1)/\Gamma(2-\alpha)$ where $\zeta$ denotes the Riemann zeta function (see \cite{plociniczak2023linear}). In particular, this shows that the constant $C_{\alpha,\beta,k,n}$ from Theorem \ref{thm:Truncation} is not optimal since it is always strictly larger than $\tilde{C}_{\alpha,n}$ for $k=1$ and $\beta=1$.  

Graded meshes can also be employed to maintain the optimal convergence rate of $2-\alpha$ while relaxing the regularity requirement to $y \in C^\alpha((0,T))$, with $y^{(m)}(t) \sim t^{\alpha-m}$ for $m = 0, 1, 2$ (see \cite{stynes2017error,kopteva2019error}). These assumptions align naturally with the typical behavior of solutions to fractional differential equations. Since we are considering a general setting, we do not impose any additional blow-up conditions (see for example \eqref{eqn:TestFunction}). Our calculations can similarly be extended to the case of a graded mesh; however, without further information about the function's behavior, the order of convergence estimates cannot be improved.
\end{rem}

\section{Numerical illustration}
As the test function on which we would like to verify results from the previous section we choose the following
\begin{equation}
\label{eqn:TestFunction}
	y_{k,\beta}(t) := \left(t-\frac{1}{2}\right)^k\left|t-\frac{1}{2}\right|^{\beta}, \quad k = 0,1, \quad \beta\in(0,1].
\end{equation}
The order of convergence of the L1 scheme can be estimated via the extrapolation, that is
\begin{equation}
	\text{order} \approx \log_2 \frac{\max_{t\in[0,T]}|\delta^\alpha_\tau y(t) - \delta^\alpha_{\tau/2}y(t)|}{\max_{t\in[0,T]}|\delta^\alpha_{\tau/2} y(t) - \delta^\alpha_{\tau/4}y(t)|},
\end{equation} 
where we refine the grid in order to infer about the exact error. In Table \ref{tab:Orders} we have gathered the results of our computations rounded to the second decimal place. We immediately observe that the estimated order of convergence agrees with the theoretical value $k+\beta-\alpha$ exceptionally well for all the choices of $\alpha$, $\beta$, and $k$. Moreover, for the case $k + \beta < \alpha$ numerical simulations yield a negative order, meaning lack of convergence. This again is in accord with the well-posedness of the Caputo derivative discussed at the beginning of Section 2. Note also that even in this case, the expression for the order equal to $k+\beta-\alpha$ is preserved regardless of the assumption that $\beta > \alpha$!

\begin{table}[h!]
	\centering
	\begin{tabular}{c|cccccccccc}
		\toprule
		$\alpha \; \backslash \; k+\beta$ & 0.1 & 0.3 & 0.5 & 0.7 & 0.9 & 1.1 & 1.3 & 1.5 & 1.7 & 1.9 \\
		\midrule
		0.1 & 0.0 & 0.2 & 0.4 & 0.6 & 0.8 & 1.0 & 1.2 & 1.4 & 1.59 & 1.83 \\
		0.3 & -0.2 & 0.0 & 0.2 & 0.4 & 0.6 & 0.8 & 1.0 & 1.2 & 1.4 & 1.68 \\
		0.5 & -0.4 & -0.2 & 0.0 & 0.2 & 0.4 & 0.6 & 0.8 & 1.0 & 1.2 & 1.49 \\
		0.6 & -0.6 & -0.4 & -0.2 & 0.0 & 0.2 & 0.4 & 0.6 & 0.8 & 1.0 & 1.29 \\
		0.9 & -0.8 & -0.6 & -0.4 & -0.2 & 0.0 & 0.2 & 0.4 & 0.6 & 0.8 & 1.0 \\
		\bottomrule
\end{tabular}

\caption{Estimated orders of convergence of the L1 scheme for various choices of $\alpha$ and $\beta$ with $T = 1$. The test function is taken to be \eqref{eqn:TestFunction}. The base of calculations is $\tau = 2^{-10}$. }
\label{tab:Orders}

\end{table}

\section*{Acknowledgement}
FdT was supported by the Spanish Government through RYC2020-029589-I, PID2021127105NB-I00 and CEX2019-000904-S funded by the MICIN/AEI. ŁP has been supported by the National Science Centre, Poland (NCN) under the grant Sonata Bis with a number NCN 2020/38/E/ST1/00153. 


\end{document}